\documentclass{article}
\usepackage{amsmath,amsthm,amscd,amssymb}
\usepackage[mathscr]{eucal}
\usepackage{amssymb}
\usepackage{latexsym}

\textwidth=150mm
\oddsidemargin=5mm
\evensidemargin=5mm
\topmargin=0mm \textheight=220mm

\theoremstyle{definition}
\newtheorem{Def}{Definition}[section]
\newtheorem{Thm}[Def]{Theorem}
\newtheorem{Prop}[Def]{Proposition}
\newtheorem{Rem}[Def]{Remark}

\newtheorem{Lem}[Def]{Lemma}

\numberwithin{equation}{section}

\begin{document}

\title{Note on Igusa's cusp form of weight 35}

\author{Toshiyuki Kikuta, Hirotaka Kodama and Shoyu Nagaoka}
\maketitle

 

\noindent
{\bf Mathematics subject classification 2010}: Primary 11F33, \and Secondary 11F46\\
\noindent
{\bf Key words}: Siegel modular forms, Congruences for modular forms, Fourier coefficients

\begin{abstract}
A congruence relation satisfied by Igusa's cusp form of weight 35 is presented.
As a tool to confirm the congruence relation, a Sturm-type theorem for the case of
 odd-weight Siegel modular forms of degree 2 is included.
\end{abstract}

\maketitle

\section{Introduction}
In \cite{Igusa1}, Igusa gave a set of generators of the graded ring
of degree 2 Siegel modular forms. In these generators, there are four even-weight forms
$\varphi_4$, $\varphi_6$, $\chi_{10}$, $\chi_{12}$, and only one odd-weight form $\chi_{35}$.
Here $\varphi_k$ is the normalized Eisenstein series of weight $k$, and $\chi_k$ is
a cusp form of weight $k$.

The purpose of this paper is to introduce a strange congruence relation of the odd-weight cusp
form $X_{35}$, which is a suitable normalization of $\chi_{35}$ (for the precise definition, 
see $\S$ \ref{generators}).\\
\\
\textbf{Main result.} Denote by $a(T;X_{35})$ the $T$-th Fourier coefficient of the
cusp form $X_{35}$. If $T$ satisfies  $\text{det}(T)\not\equiv 0 \pmod{23}$, then
\[
a(T;X_{35}) \equiv 0 \pmod{23},
\] 
or equivalently,
\[
\varTheta(X_{35}) \equiv 0 \pmod{23},
\]
where $\varTheta$ is the theta operator on Siegel modular forms
(for the precise definition, see $\S$ \ref{theta}). \\
\\
\quad This result shows that {\it almost} all the Fourier coefficients $a(T;X_{35})$
are divisible by 23. 
\section{Preliminaries}
\subsection{Notation}
\label{notation}
First we confirm the notation. Let $\Gamma _n=Sp_n(\mathbb{Z})$ be the Siegel modular group of degree $n$ and $\mathbb{H}_n$ 
the Siegel upper-half space of degree $n$. We denote by
$M_k(\Gamma _n)$ the $\mathbb{C}$-vector space of all
Siegel modular forms of weight $k$ for $\Gamma_n$, and $S_k(\Gamma _n)$ is the subspace of cusp forms.

Any $F(Z)$ in $M_k(\Gamma _n)$ has a Fourier expansion of the form
\[
F(Z)=\sum_{T\in L_n}a(T;F)q^T,\quad q^T:=e^{2\pi i\text{tr}(TZ)},
\quad Z\in\mathbb{H}_n,
\]
where $T$ runs over all elements of $L_n$, and 
\begin{align*}
\Lambda_n&:=\{ T=(t_{ij})\in Sym_n(\mathbb{Q})\;|\; t_{ii},\;2t_{ij}\in\mathbb{Z}\; \},\\
L_n&:= \{ T\in \Lambda _n \;|\; T\ \text{is\ semi-positive\ definite} \}.
\end{align*}
In this paper, we deal mainly with the case of $n=2$. For simplicity, we write 
\[
T=(m,n,r)\quad \text{for}\quad T=\begin{pmatrix} m & \frac{r}{2}\\
\frac{r}{2} & n \end{pmatrix}\in \Lambda _2.
\]

For a subring $R$ of $\mathbb{C}$, let $M_{k}(\Gamma _n)_{R}\subset M_{k}(\Gamma _n)$ denote the space of all modular forms 
whose Fourier coefficients lie in $R$. 
\subsection{Igusa's generators}
\label{generators}
Let
\[
M(\Gamma_2)=\bigoplus_{k\in\mathbb{Z}}M_k(\Gamma_2)
\]
be the graded ring of Siegel modular forms of degree 2. Igusa \cite{Igusa1} gave a set of generators of the ring $M(\Gamma_2)$.
The set consists of five generators
\[
\varphi_4,\quad \varphi_6,\quad \chi_{10},\quad \chi_{12},\quad \chi_{35},
\]
where $\varphi_k$ is the normalized Eisenstein series on $\Gamma_2$ and $\chi_k$ is a cusp form of weight $k$.
Moreover he showed that the even-weight generators $\varphi_4,\quad \varphi_6,\quad \chi_{10},\quad \chi_{12}$
are algebraically independent. Later, he extended the result to the integral case (\cite{Igusa2}). 
Namely, he gave a minimal set of generators over $\mathbb{Z}$ of the ring
\[
M(\Gamma_2)_{\mathbb{Z}}=\bigoplus_{k\in\mathbb{Z}}M_k(\Gamma_2)_{\mathbb{Z}}.
\]
The set of generators consists of fifteen modular forms including the following forms:
\[
X_4:=\varphi_4,\quad X_6:=\varphi_6,\quad X_{10}:=-2^{-2}\chi_{10},\quad X_{12}:=2^2\cdot 3\chi_{12},\quad
X_{35}:=2^2i\chi_{35}.
\]
Of course, these forms have rational integral Fourier coefficients under the following
normalization:
\begin{align*}
& a((0,0,0);X_4)=a((0,0,0);X_6)=1\\
& a((1,1,1);X_{10})=a((1,1,1);X_{12})=1\\
& a((2,3,-1);X_{35})=1.
\end{align*}
\subsection{Order and the $p$-minimum matrix}
\label{order}
We define a lexicographical order ``$\succ $'' for two different elements $T=(m,n,r)$ and $T'=(m',n',r')$ 
 of $\Lambda _2$ by
\begin{align*}
T\succ T'\Longleftrightarrow &(1)\ {\rm tr}(T)>{\rm tr}(T') \quad {\rm or}\quad  (2)\ {\rm tr}(T)={\rm tr}(T'),\ m>m'\quad {\rm or}\\ 
&(3)\ {\rm tr}(T)={\rm tr}(T'),\ m=m',\ r> r'. 
\end{align*}
Let $p$ be a prime and $\mathbb{Z}_{(p)}$ the local ring consisting of $p$-integral rational numbers.
For $F\in M_k(\Gamma_2)_{\mathbb{Z}_{(p)}}$, we define {\it the $p$-minimum matrix}  $m_p(F)$ of $F$ by 
\begin{align*}
m_p(F):=\min \{\;T\in L_2\;|\;a(T;F) \not \equiv 0 \pmod{p}\}, 
\end{align*}
where the ``min'' is defined in the sense of the above order. If $F\equiv 0 \pmod{p}$, then we define $m_p(F)=(\infty )$.
\begin{Rem}
The $p$-minimum matrices of Igusa's generators are
\[
m_p(X_4)=m_p(X_6)=(0,0,0),\quad m_p(X_{10})=m_p(X_{12})=(1,1,-1),\quad
m_p(X_{35})=(2,3,-1),
\]
for any prime number $p$.
\end{Rem}
The following properties are essential.
\begin{Lem}
\label{Lem1}
\noindent{(1)} $T_1\succ T_2$, $S_1\succ S_2$ implies $T_1+S_1\succ T_2+S_2$. \\
\noindent{(2)} $T_1\succ T_2$ implies $T_1\pm S\succ T_2 \pm S$. \\
\noindent{(3)} $T+S=T'+S'$, $T\succ T'$ implies $S\prec S'$. \\
\noindent{(4)} $m_p(F\cdot G)=m_p(F)+m_p(G)$.
\end{Lem}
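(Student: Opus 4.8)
The plan is to recognize $\succ$ as the pullback of a standard order and then reduce everything to order-group bookkeeping. Define $\iota\colon \Lambda_2 \to \mathbb{Z}^3$ by $\iota(T) = (\mathrm{tr}(T), m, r)$ for $T = (m,n,r)$; since $(m,n,r)$ is recovered from $(m+n, m, r)$, this is an injective group homomorphism, and by construction $T \succ T'$ holds precisely when $\iota(T)$ exceeds $\iota(T')$ in the lexicographic order on $\mathbb{Z}^3$. Thus $\succ$ is a translation-invariant strict total order on the abelian group $\Lambda_2$; in particular it is transitive. Parts (1)--(3) then become routine, and I would prove them in the order (2), (1), (3).

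For (2), note first that $T_1 \pm S$ and $T_2 \pm S$ still lie in $\Lambda_2$, which is closed under subtraction. If $T_1 \succ T_2$, then adding the fixed vector $\pm\iota(S)$ to both sides of $\iota(T_1) > \iota(T_2)$ changes neither the first coordinate in which they disagree nor the sign of that difference, so $\iota(T_1 \pm S) > \iota(T_2 \pm S)$, i.e. $T_1 \pm S \succ T_2 \pm S$. Part (1) follows by chaining: $T_1 + S_1 \succ T_2 + S_1$ by (2), $T_2 + S_1 \succ T_2 + S_2$ by (2), hence $T_1 + S_1 \succ T_2 + S_2$ by transitivity. For (3): if $S \succeq S'$ then either $S = S'$, so $T + S = T' + S'$ forces $T = T'$, contradicting $T \succ T'$; or $S \succ S'$, and then (1) yields $T + S \succ T' + S'$, again contradicting $T + S = T' + S'$. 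Hence $S \prec S'$.

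Part (4) is where the actual argument lies. Put $T_F := m_p(F)$, $T_G := m_p(G)$. If $F \equiv 0$ or $G \equiv 0 \pmod p$, then $FG \equiv 0 \pmod p$ and the identity holds with the evident convention $(\infty) + X = (\infty)$; so assume $F, G \not\equiv 0 \pmod p$. The product has the Cauchy-type expansion $a(T; FG) = \sum a(T_1; F)\, a(T_2; G)$, summed over $T_1, T_2 \in L_2$ with $T_1 + T_2 = T$; this sum is finite because $T_1, T - T_1 \in L_2$ forces $0 \le \mathrm{tr}(T_1) \le \mathrm{tr}(T)$, which bounds the diagonal entries of $T_1$, and then semi-positivity of $T_1$ bounds its off-diagonal entry. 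Now I would verify two claims. First, $a(T_F + T_G; FG) \not\equiv 0 \pmod p$: in any summand that is nonzero mod $p$ one has $a(T_1;F), a(T_2;G) \not\equiv 0$, hence $T_1 \succeq T_F$ and $T_2 \succeq T_G$ by minimality; if $T_1 \succ T_F$, then (3) applied to $T_1 + T_2 = T_F + T_G$ gives $T_2 \prec T_G$, a contradiction, so the only surviving summand is $(T_1, T_2) = (T_F, T_G)$ and $a(T_F + T_G; FG) \equiv a(T_F; F)\, a(T_G; G) \not\equiv 0 \pmod p$. Second, $a(T; FG) \equiv 0 \pmod p$ whenever $T \prec T_F + T_G$: a summand nonzero mod $p$ would again give $T_1 \succeq T_F$, $T_2 \succeq T_G$, hence $T = T_1 + T_2 \succeq T_F + T_G$ by the semi-strict analogues of (1)--(2), contradicting $T \prec T_F + T_G$. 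Combining the two claims, $m_p(FG) = T_F + T_G = m_p(F) + m_p(G)$.

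The main obstacle, modest as it is, sits entirely in (4): establishing the product formula for the Fourier coefficients over $L_2$ (finiteness of the convolution, and that both summation indices automatically lie in $L_2$ because the Fourier support does), together with carefully handling the degenerate subcases $T_1 = T_F$ or $T_2 = T_G$ when applying the non-strict versions of (1)--(3). Parts (1)--(3) themselves are purely formal once $\succ$ has been identified with the lexicographic order on $\mathbb{Z}^3$.
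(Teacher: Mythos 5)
Your proposal is correct and follows essentially the same route as the paper: parts (1)--(3) are the formal translation-invariance properties of the lexicographic order (which the paper dismisses as trivial, and which you make explicit via the embedding $T\mapsto(\mathrm{tr}(T),m,r)$), and part (4) is proved by exactly the paper's argument, combining the convolution formula for Fourier coefficients with the minimality of $m_p(F)$ and $m_p(G)$ and parts (1)--(3). Your added care about finiteness of the convolution and the degenerate case $F\equiv 0$ or $G\equiv 0\pmod p$ is sound but does not change the substance.
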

\begin{proof}
(1), (2) Trivial. \\
(3) We use (2) without notice. By the assumption $T+S=T'+S'$, we have $T-T'=S'-S$. Then $0_2\prec T-T'=S'-S$ because of $T\succ T'$. 
Hence $S\prec S'$. \\
(4) Let $m_p(F)=T_0$ and $m_p(G)=T'_0$. Then for all $T\prec T_0$ (resp. $T\prec T'_0$), $a(T;F)\equiv 0 \pmod{p}$ and 
$a(T_0;F) \not \equiv 0 \pmod{p}$ (resp. $a(T;G) \equiv 0 \pmod{p}$ and $a(T_0;G))\not \equiv 0\pmod{p}$). Now, recall that the 
$T$-th Fourier coefficient $a(T;F\cdot G)$ of $F\cdot G$ is given by
\[
a(T;F\cdot G)=\sum _{\substack{S, S'\in L _2 \\ S+S'=T}}a(S;F)a(S';G). 
\]

If $T\prec T_0+T'_0$, then $T=S+S'\prec T_0+T'_0$ and hence $S\prec T_0$ or $S'\prec T'_0$ because of (1). In this case, 
$a(S;F)\equiv 0 \pmod{p}$ or 
$a(S';G) \equiv 0 \pmod{p}$. Therefore $a(S;F)a(S';G)\equiv 0 \pmod{p}$ for each $S$, $S'$ with $S+S'\prec T_0+S_0$. 
This implies $a(T;F) \equiv 0 \pmod{p}$ for all $T\prec T_0+S_0$. 

In order to complete the proof of (4), we need to prove that $a(T_0+T'_0;F\cdot G)\not \equiv 0 \pmod{p}$. If $S+S'=T_0+T'_0$, 
then we have by (3) that $S\prec T_0$, $S'\succ T'_0$ or $S\succ T_0$, $S'\prec T'_0$ or $S=T_0$, $S'=T'_0$. In the first two cases, 
since $a(S;F)\equiv 0 \pmod{p}$ or $a(S';G)\equiv 0 \pmod{p}$, we get $a(S;F)a(S';G)\equiv 0 \pmod{p}$. 
In the third case, $a(T_0;F)a(T'_0;G)\not \equiv 0 \pmod{p}$. Thus $a(T_0+T'_0;F\cdot G)\not \equiv 0 \pmod{p}$, 
namely $m_p(F\cdot G)=T_0+T'_0$. 
This completes the proof of (4).  

\end{proof}
\subsection{Sturm-type theorem}
\label{sturm}
A Sturm-type theorem for the Siegel modular forms of degree $2$ was presented in \cite{C-C-K}. We introduce the statement of this theorem for the case of level $1$.
\begin{Thm}[Choi-Choie-Kikuta \cite{C-C-K}] 
\label{C-C-K}
Let $p$ be a prime with $p\ge 5$ and $k$ an even positive integer. For $F\in M_k(\Gamma _2)_{\mathbb{Z}_{(p)}}$ with Fourier expansion 
\begin{align*}
F=\sum _{T\in L_2}a(T;F)q^T,
\end{align*}
we assume that $a((m,n,r);F) \equiv 0 \pmod{p}$ for all $m$, $n$, $r$ such that $0\leq m$, $n\leq \frac{k}{10}$ and $4mn-r^2\geq 0$. Then $F\equiv 0\pmod{p}$. 
\end{Thm}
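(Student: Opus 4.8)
\noindent\emph{A proof strategy.}
The idea is to pass to the polynomial ring in Igusa's generators, to peel off the factor $X_{10}$ by restricting to the diagonal, and then to descend in the weight with Lemma~\ref{Lem1}. Since $k$ is even, for $p\ge 5$ one may write $F\equiv\sum c_{abcd}X_4^aX_6^bX_{10}^cX_{12}^d\pmod p$ with $c_{abcd}\in\mathbb{Z}_{(p)}$ (this uses Igusa's integral structure theory, $p=2,3$ being the bad primes), and the monomials have $\mathbb{F}_p$-linearly independent reductions, so that $F\equiv 0\pmod p$ is equivalent to the vanishing of the polynomial $\overline F:=\sum\overline{c_{abcd}}X_4^aX_6^bX_{10}^cX_{12}^d$ in $\mathbb{F}_p[X_4,X_6,X_{10},X_{12}]$. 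I would prove $\overline F=0$ by induction on $k$; for $0\le k<10$ the box $0\le m,n\le k/10$ reduces to $(0,0,0)$ while $\dim_{\mathbb{F}_p}M_k(\Gamma_2)\le 1$, so the vanishing of $a((0,0,0);F)$ already forces $\overline F=0$.

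For the inductive step, divide by $X_{10}$ in the polynomial ring: $\overline F=X_{10}\overline G+\overline H$ with $\overline G\in M_{k-10}(\Gamma_2)_{\mathbb{F}_p}$ and $\overline H\in\mathbb{F}_p[X_4,X_6,X_{12}]$. Restriction to the diagonal $\mathbb{H}_1\times\mathbb{H}_1$ is a ring homomorphism which, since $X_{10}$ is a cusp form and $S_{10}(\Gamma_1)_{\mathbb{F}_p}=0$, annihilates $X_{10}$; hence $\overline F|_{\mathrm{diag}}=\overline H|_{\mathrm{diag}}$. The hypothesis kills every $a((m,n,r);\overline F)$ with $m,n\le k/10$, hence every Fourier coefficient $\sum_r a((m,n,r);\overline F)$ of $\overline F|_{\mathrm{diag}}$ with $m,n\le k/10$; as $k/10\ge k/12$, Sturm's theorem in degree $1$, applied in each of the two variables, gives $\overline F|_{\mathrm{diag}}=0$, so $\overline H|_{\mathrm{diag}}=0$. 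Now $X_4\mapsto E_4\otimes E_4$, $X_6\mapsto E_6\otimes E_6$, $X_{12}\mapsto c\,(\Delta\otimes\Delta)$ with $c\in\mathbb{F}_p^\times$ (here $X_{12}$, unlike $X_{10}$, does not vanish on the diagonal mod $p\ge 5$, and $S_{12}(\Gamma_1)_{\mathbb{F}_p}$ is one-dimensional), and $E_4\otimes E_4$, $E_6\otimes E_6$, $\Delta\otimes\Delta$ are algebraically independent over $\mathbb{F}_p$ (for $p\ge 5$, $E_4,E_6$ are algebraically independent, and a short specialization argument upgrades this to the three products); thus restriction to the diagonal is injective on $\mathbb{F}_p[X_4,X_6,X_{12}]$, forcing $\overline H=0$ and $\overline F=X_{10}\overline G$.

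It remains to show $a(T';\overline G)\equiv 0\pmod p$ for every $T'=(m',n',r')\in L_2$ with $m',n'\le(k-10)/10$, so that $\overline G$ satisfies the hypothesis in weight $k-10$ and the induction closes. Here $m_p(X_{10})=(1,1,-1)$ by the Remark, and since $X_{10}$ is cuspidal every $S\in L_2$ with $a(S;X_{10})\not\equiv 0\pmod p$ is positive definite, so $S\succeq(1,1,-1)$ and both diagonal entries of $S$ are $\ge 1$. Given such $T'$, put $T:=T'+(1,1,-1)$, so $m,n\le k/10$ and hence $a(T;\overline F)\equiv 0$; expanding
\[
0\equiv a(T;\overline F)=\sum_{\substack{S,S'\in L_2\\ S+S'=T}}a(S;X_{10})\,a(S';\overline G)\pmod p,
\]
the summand $S=(1,1,-1)$ equals $a(T';\overline G)$ (as $a((1,1,-1);X_{10})=1$), while for every other contributing $S$ we have $S\succ(1,1,-1)$, so by Lemma~\ref{Lem1}(2) the matrix $S'=T'-(S-(1,1,-1))$ satisfies $S'\prec T'$, and since $S-(1,1,-1)$ has nonnegative diagonal entries $S'$ still has diagonal entries $\le m'$ and $\le n'$. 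Induction on $T'$ in the order $\succ$ then makes all these $a(S';\overline G)$ vanish, so $a(T';\overline G)\equiv 0$; by the inductive hypothesis $\overline G=0$ and $\overline F=0$.

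The main obstacle is not the bookkeeping with Lemma~\ref{Lem1} but the structural inputs pinning down $p\ge 5$: that for $p\ge 5$ every even-weight element of $M(\Gamma_2)_{\mathbb{Z}_{(p)}}$ is a $\mathbb{Z}_{(p)}$-polynomial in $X_4,X_6,X_{10},X_{12}$ with the generators algebraically independent mod $p$, together with the analogous facts for $\Gamma_1$ and the non-degeneracy $X_{12}|_{\mathrm{diag}}\not\equiv 0\pmod p$ — each of which can fail at $p=2,3$. A secondary subtlety, already visible above, is that it is the \emph{box} $0\le m,n\le k/10$, rather than a triangular region cut out by the trace, that is stable under the descent by $X_{10}$; this is exactly why the cuspidality of $X_{10}$ (hence the positive-definiteness of the matrices $S$ occurring in the convolution) enters the argument.
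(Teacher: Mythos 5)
First, note that the paper does not prove Theorem~\ref{C-C-K} at all: it is imported verbatim from the reference \cite{C-C-K}, so there is no in-paper argument to measure yours against. Your reconstruction is in fact the standard route to such a result (mod-$p$ structure theorem for $M^{\mathrm{even}}(\Gamma_2)_{\mathbb{Z}_{(p)}}$, restriction to the diagonal/Witt operator to kill the $\chi_{10}$-free part via the genus-one Sturm bound $k/12\le k/10$, then descent from $F=X_{10}G$ to weight $k-10$ using the cuspidality of $X_{10}$ and the order of Lemma~\ref{Lem1}), and the combinatorial heart of the argument --- that every positive definite $S$ with $a(S;X_{10})\ne 0$ satisfies $S\succeq(1,1,-1)$ with both diagonal entries $\ge 1$, so that the box $0\le m,n\le k/10$ is stable under the descent --- is correct and well executed.

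There is, however, one genuinely false justification in the middle of the argument: the claim that for $p\ge 5$ the reductions $\overline{E}_4,\overline{E}_6$ are algebraically independent over $\mathbb{F}_p$. They are not: by Swinnerton-Dyer, $E_{p-1}\equiv 1\pmod p$, and writing $E_{p-1}=A(E_4,E_6)$ gives the nontrivial relation $A(\overline{E}_4,\overline{E}_6)=1$ in $\mathbb{F}_p[[q]]$ for every $p\ge 5$. Consequently your claimed injectivity of the diagonal restriction on the full (ungraded) ring $\mathbb{F}_p[X_4,X_6,X_{12}]$ is not established by the route you indicate, and the analogous caveat applies to your opening assertion that the degree-two monomials have ``$\mathbb{F}_p$-linearly independent reductions.'' The repair is easy but changes the nature of the step: everything you need is a statement about a \emph{single fixed weight} $k$, where only linear (not algebraic) independence is at stake. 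Since $M_k(\Gamma_1)_{\mathbb{Z}_{(p)}}$ and $M_k(\Gamma_2)_{\mathbb{Z}_{(p)}}$ are saturated lattices freely spanned (for $p\ge 5$) by the weight-$k$ monomials in the respective generators, reduction mod $p$ is injective on each fixed-weight piece; in particular the weight-$k$ monomials $f\otimes f$ with $f=E_4^aE_6^b\Delta^d$ sit inside the tensor-product basis $\{\overline{f}_i\otimes\overline{f}_j\}$ of $(M_k(\Gamma_1)\otimes M_k(\Gamma_1))\otimes\mathbb{F}_p\hookrightarrow\mathbb{F}_p[[q_{11},q_{22}]]$ and are therefore linearly independent, which is exactly the injectivity you need to conclude $\overline{H}=0$. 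With that replacement (and with citations for the remaining inputs you correctly flag: the $\mathbb{Z}_{(p)}$-structure theorem for $p\ge 5$, $X_{10}|_{\mathrm{diag}}=0$, and $X_{12}|_{\mathrm{diag}}=(1+10+1)\,\Delta\otimes\Delta$ with $12\in\mathbb{F}_p^{\times}$), the argument goes through.
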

We rewrite this theorem for later use:
\begin{Thm}
\label{C-C-K2}
Let $p$ be a prime with $p\ge 5$. Assume that $F\in M_k(\Gamma _2)_{\mathbb{Z}_{(p)}}$ satisfies $m_p(F)\succ ([\frac{k}{10}],[\frac{k}{10}],r_0)$ 
for the maximum $r_0\in \mathbb{Z}$ such that $([\frac{k}{10}],[\frac{k}{10}],r_0)\in L_2$. Then $m_p(F)=(\infty )$, i.e., $F\equiv 0$ mod $p$. 
\end{Thm}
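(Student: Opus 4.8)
The plan is to obtain Theorem~\ref{C-C-K2} as a direct reformulation of Theorem~\ref{C-C-K}: the only work needed is a dictionary between the language of $p$-minimum matrices and the coefficient-vanishing hypothesis of \cite{C-C-K}, together with one elementary remark about the order $\succ$. Throughout I write $N:=[\frac{k}{10}]$. First I would pin down $r_{0}$ explicitly: by hypothesis $r_{0}$ is the largest integer with $(N,N,r_{0})\in L_{2}$, i.e.\ with $4N^{2}-r_{0}^{2}\ge 0$, so $|r_{0}|\le 2N$ and hence $r_{0}=2N$; note $(N,N,2N)\in L_{2}$ because its determinant is $N^{2}-N^{2}=0$.

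Next I would unfold the hypothesis $m_{p}(F)\succ(N,N,r_{0})$. If $m_{p}(F)=(\infty)$ there is nothing to prove. Otherwise $m_{p}(F)$ is, by definition, the $\succ$-smallest $T\in L_{2}$ with $a(T;F)\not\equiv 0\pmod p$, so $m_{p}(F)\succ(N,N,r_{0})$ says precisely that
\[
a(T;F)\equiv 0\pmod p\qquad\text{for every }T\in L_{2}\text{ with }T\preceq(N,N,r_{0}).
\]

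The one genuine step is the combinatorial claim that every $(m,n,r)\in L_{2}$ with $0\le m,n\le N$ satisfies $(m,n,r)\preceq(N,N,r_{0})$. Indeed ${\rm tr}(m,n,r)=m+n\le 2N={\rm tr}(N,N,r_{0})$. If $m+n<2N$, then $(m,n,r)\prec(N,N,r_{0})$ by clause~(1) of the definition of $\succ$. If $m+n=2N$, then $m\le N$ and $n\le N$ force $m=n=N$; since $(N,N,r)\in L_{2}$ we have $r^{2}\le 4N^{2}$, hence $r\le 2N=r_{0}$, so $(N,N,r)\preceq(N,N,r_{0})$ by clause~(3) (or $r=r_{0}$). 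Combining this with the previous display, the hypothesis of Theorem~\ref{C-C-K2} gives $a((m,n,r);F)\equiv 0\pmod p$ for all integers $m,n,r$ with $0\le m,n\le\frac{k}{10}$ and $4mn-r^{2}\ge 0$ (for integral $m,n$ the condition $m,n\le\frac{k}{10}$ is the same as $m,n\le N$, and a Fourier coefficient indexed outside $L_{2}$ vanishes anyway). Theorem~\ref{C-C-K} then yields $F\equiv 0\pmod p$, i.e.\ $m_{p}(F)=(\infty)$, as required.

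I do not expect any real obstacle beyond careful bookkeeping with the order $\succ$ and with the definition of $m_{p}$. The only point to keep in mind is that Theorem~\ref{C-C-K} assumes $k$ even, so the argument above proves Theorem~\ref{C-C-K2} verbatim for even weights; the odd-weight situation actually needed for $X_{35}$ has to be reduced to the even-weight case separately, for instance by applying the even-weight statement to a product of $F$ with a fixed odd-weight form such as $X_{35}$ and translating the hypothesis through $m_{p}(F\cdot X_{35})=m_{p}(F)+m_{p}(X_{35})$ from Lemma~\ref{Lem1}(4).
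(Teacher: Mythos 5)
Your proposal is correct and follows essentially the same route as the paper: the paper's proof consists precisely of asserting the inclusion $\{T\in L_2 \mid T\preceq([\frac{k}{10}],[\frac{k}{10}],r_0)\}\supset\{(m,n,r)\in L_2\mid m,n\le\frac{k}{10}\}$ and invoking Theorem~\ref{C-C-K}, and your argument simply spells out that inclusion (via $r_0=2[\frac{k}{10}]$ and the case analysis on the trace). Your closing remark is also well taken --- the paper tacitly inherits the evenness of $k$ from Theorem~\ref{C-C-K}, and the odd-weight case is indeed handled separately in Proposition~\ref{Prop1} exactly by the reduction through $X_{35}$ and Lemma~\ref{Lem1}(4) that you describe.
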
 
\begin{proof}
The assertion follows immediately from the inclusion
\begin{align} 
\label{inc}
\Big\{ T \in L_2 \;\Big{|}\; T\preceq \Big( \Big[ \frac{k}{10} \Big],\Big[ \frac{k}{10} \Big],r_0\Big) \Big\} 
\supset \Big\{(m,n,r) \in L_2 \;\Big{|}\; m,n \le \frac{k}{10} \Big\}. 
\end{align}
\end{proof}
\begin{Rem}
In general, the converse of inclusion (\ref{inc}) is not true. For example $([\frac{k}{10}]+1,0,0)\prec ([\frac{k}{10}],[\frac{k}{10}],r_0)$ (for $k\geq 20$). 
We need a statement of this type to aid the proof of the next proposition.  
\end{Rem}
In order to prove our main result, we need a Sturm-type theorem for {\it the odd-weight case}: 
\begin{Prop}
\label{Prop1}
Let $p$ be a prime with $p\ge 5$ and $k$ an odd positive integer. For $F\in M_k(\Gamma _2)_{\mathbb{Z}_{(p)}}$, 
we assume that $m_p(F)\succ ([\frac{k-35}{10}]+2,[\frac{k-35}{10}]+3,r_0-1)$, where $r_0\in \mathbb{Z}$ is the maximum number such 
that $([\frac{k-35}{10}],[\frac{k-35}{10}],r_0)\in L_2$. Then $m_p(F)=(\infty )$, namely $F\equiv 0 \pmod{p}$.   
\end{Prop}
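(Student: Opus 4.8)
The plan is to reduce the odd-weight case to the even-weight Sturm bound of Theorem \ref{C-C-K2} by multiplying $F$ with the odd-weight generator $X_{35}$. Since $X_{35}\in S_{35}(\Gamma_2)_{\mathbb{Z}_{(p)}}$ and $m_p(X_{35})=(2,3,-1)$ for every prime $p$, the product $F\cdot X_{35}$ lies in $M_{k+35}(\Gamma_2)_{\mathbb{Z}_{(p)}}$, which has \emph{even} weight, so Theorem \ref{C-C-K2} applies to it. By Lemma \ref{Lem1}(4), $m_p(F\cdot X_{35})=m_p(F)+m_p(X_{35})=m_p(F)+(2,3,-1)$.

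First I would translate the hypothesis through this shift. We are given $m_p(F)\succ ([\tfrac{k-35}{10}]+2,[\tfrac{k-35}{10}]+3,r_0-1)$, where $r_0$ is maximal with $([\tfrac{k-35}{10}],[\tfrac{k-35}{10}],r_0)\in L_2$. Adding $(2,3,-1)$ to both sides and using Lemma \ref{Lem1}(2), we get
\[
m_p(F\cdot X_{35})\;\succ\;\Big(\Big[\tfrac{k-35}{10}\Big]+4,\ \Big[\tfrac{k-35}{10}\Big]+6,\ r_0-2\Big).
\]
Next I would compare this with the threshold in Theorem \ref{C-C-K2} for weight $k'=k+35$, namely $([\tfrac{k'}{10}],[\tfrac{k'}{10}],r_0')$ with $r_0'$ maximal so that this lies in $L_2$. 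Writing $k=10q+s$ with $0\le s\le 9$ (so $k$ odd forces $s$ odd), one has $[\tfrac{k-35}{10}]=q-4$ or $q-3$ depending on whether $s<5$ or $s\ge 5$, while $[\tfrac{k+35}{10}]=q+3$ or $q+4$ in the same two cases; in either case $[\tfrac{k+35}{10}]=[\tfrac{k-35}{10}]+7$. So the even-weight threshold diagonal entry is $[\tfrac{k-35}{10}]+7$, which is strictly larger than the diagonal entry $[\tfrac{k-35}{10}]+4$ (resp. $+6$) appearing above; hence, by the order definition (comparing traces first), $([\tfrac{k-35}{10}]+4,[\tfrac{k-35}{10}]+6,r_0-2)\prec([\tfrac{k'}{10}],[\tfrac{k'}{10}],r_0')$, and therefore $m_p(F\cdot X_{35})\succ([\tfrac{k'}{10}],[\tfrac{k'}{10}],r_0')$. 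By Theorem \ref{C-C-K2}, $F\cdot X_{35}\equiv 0\pmod p$. Finally, since $X_{35}$ has a Fourier coefficient equal to $1$ (namely $a((2,3,-1);X_{35})=1$), it is not $\equiv 0\pmod p$, and because $m_p$ is additive under multiplication (Lemma \ref{Lem1}(4)), $F\cdot X_{35}\equiv 0$ forces $m_p(F)=(\infty)$, i.e. $F\equiv 0\pmod p$.

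The routine-but-delicate point is the bookkeeping with the floor functions and the auxiliary $r$-coordinates $r_0, r_0', r_0-1, r_0-2$: one must check that the proposed inequalities survive at the level of the lexicographic order, not merely the diagonal entries, and in particular that the slack of $7$ in the diagonal (against a loss of only $4$ or $6$) comfortably dominates any discrepancy in trace or in the $r$-entry. I expect this comparison of orders — confirming $([\tfrac{k-35}{10}]+4,[\tfrac{k-35}{10}]+6,r_0-2)\prec([\tfrac{k'}{10}],[\tfrac{k'}{10}],r_0')$ case by case in $s$ — to be the only real obstacle; everything else is a direct application of Lemma \ref{Lem1} and Theorem \ref{C-C-K2}. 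The reason the statement is phrased with the somewhat mysterious constants $+2,+3,-1$ is exactly so that, after adding $m_p(X_{35})=(2,3,-1)$, the result lands safely inside the even-weight Sturm region.
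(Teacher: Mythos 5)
There is a genuine gap, and it occurs at the decisive step. After computing $m_p(F\cdot X_{35})\succ B$ with $B=([\tfrac{k-35}{10}]+4,[\tfrac{k-35}{10}]+6,r_0-2)$, you observe that $B\prec C$ where $C=([\tfrac{k+35}{10}],[\tfrac{k+35}{10}],r_0')$ is the Sturm threshold for weight $k+35$, and conclude $m_p(F\cdot X_{35})\succ C$. But from $m_p(F\cdot X_{35})\succ B$ and $B\prec C$ nothing follows about the relation between $m_p(F\cdot X_{35})$ and $C$: the two inequalities point in opposite directions. To invoke Theorem \ref{C-C-K2} for $F\cdot X_{35}$ you would need $m_p(F\cdot X_{35})\succ C$, i.e.\ non-vanishing controlled up to trace $2[\tfrac{k+35}{10}]\approx 2[\tfrac{k-35}{10}]+14$, whereas the hypothesis only reaches trace $2[\tfrac{k-35}{10}]+10$ after the shift by $(2,3,-1)$. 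This is not a bookkeeping issue that more careful case analysis in $s$ will fix; the approach of multiplying by $X_{35}$ raises the weight and hence raises the Sturm threshold beyond what the hypothesis supplies. Indeed, the paper's remark immediately before the proof points out that the ``multiply by $X_{35}$ and apply the even-weight theorem'' route only yields the weaker statement with bound $\tfrac{k+35}{10}$, and that the whole point of Proposition \ref{Prop1} is to do better than that.

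The missing idea is to go the other way: use Igusa's structure theorem, which gives $M_k(\Gamma_2)_{\mathbb{Z}_{(p)}}=X_{35}\cdot M_{k-35}(\Gamma_2)_{\mathbb{Z}_{(p)}}$ for odd $k$, to \emph{factor} $F=X_{35}\cdot G$ with $G$ of even weight $k-35$. Then Lemma \ref{Lem1}(4) gives $m_p(G)=m_p(F)-(2,3,-1)\succ([\tfrac{k-35}{10}],[\tfrac{k-35}{10}],r_0)$, which is exactly the hypothesis of Theorem \ref{C-C-K2} at the \emph{lower} weight $k-35$, so $G\equiv 0$ and hence $F\equiv 0 \pmod p$. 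Your closing intuition about the constants $+2,+3,-1$ is right in spirit but reversed in direction: they are chosen so that \emph{subtracting} $(2,3,-1)$ lands on the weight-$(k-35)$ Sturm threshold, not so that adding it lands inside the weight-$(k+35)$ region.
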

\begin{Rem}
When $F\in M_k(\Gamma _2)_{\mathbb{Z}_{(p)}}$ is of odd weight, $X_{35}\cdot F\in M_{k+35}(\Gamma _2)_{\mathbb{Z}_{(p)}}$ is of even weight. Using 
Theorem \ref{C-C-K} directly, we have the following statement: If $a((m,n,r);F)\equiv 0 \pmod{p}$ for all $m$, $n$, $r$ 
such that $0\le m$, $n\le \frac{k+35}{10}$ and $4mn-r^2\ge 0$, then $F\equiv 0 \pmod{p}$. 

For our purpose, however, the estimation of Proposition \ref{Prop1} is better than this estimation.  
\end{Rem}

\begin{proof}[Proof of Proposition \ref{Prop1}]
First note that $M_k(\Gamma _2)_{\mathbb{Z}_{(p)}}=X _{35}M_{k-35}(\Gamma _2)_{\mathbb{Z}_{(p)}}$ for odd $k$. 
Hence there exists $G\in M_{k-35}(\Gamma _2)_{\mathbb{Z}_{(p)}}$ such that $F=X _{35}\cdot G$. Using (4) of Lemma \ref{Lem1}, 
we get $m_p (F)=m_p(X_{35})+m_p(G)$. Since $m_p(X_{35})=(2,3,-1)$, we have 
\begin{align*}
m_p(G)=m_p(F)-(2,3,-1)\succ \Big( \Big[ \frac{k-35}{10} \Big] ,\Big[\frac{k-35}{10} \Big] ,r_0\Big).
\end{align*} 
It should be noted that Lemma \ref{Lem1} (2) is used to get the last inequality.
 Since $G$ is of even weight, we can apply Theorem \ref{C-C-K2} to $G$. 
This shows that $F=X_{35}\cdot G \equiv 0 \pmod{p}$.    
\end{proof}
\subsection{Theta operator}
\label{theta}
In \cite{B-N}, Serre used the theta operator $\theta$ on elliptic modular forms to develop the theory of $p$-adic modular forms:
\[
\theta=q\frac{d}{dq}:\; f=\sum a(t;f)q^t \longmapsto \theta (f):=\sum t\cdot a(t;f)q^t.
\]
Later the operator was generalized to the case of Siegel modular forms:
\[
\varTheta :\; F=\sum a(T;F)q^T \longmapsto \varTheta (F):=\sum \text{det}(T)\cdot a(T;F)q^T
\]
(e.g.cf. \cite{B-N})). Moreover the following fact was proven:
\begin{Thm}[B\"{o}cherer-Nagaoka \cite{B-N}] 
\label{B-N}
Assume that a prime $p$ satisfies $p\geq n+3$. Then for any Siegel modular form $F$ in $M_k(\Gamma_n)_{\mathbb{Z}_{(p)}}$,
there exists a Siegel cusp form $G$ in $S_{k+p+1}(\Gamma_n)_{\mathbb{Z}_{(p)}}$ satisfying
\[
\varTheta (F) \equiv G \pmod{p}.
\]
\end{Thm}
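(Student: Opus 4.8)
The plan is to separate the two independent assertions in the statement: that $\varTheta(F)$ is congruent mod $p$ to a \emph{holomorphic} modular form of weight $k+p+1$, and that this form may be taken to be a \emph{cusp} form. The cuspidality is essentially free. Writing $\varTheta(F)=\sum_{T\in L_n}\det(T)\,a(T;F)\,q^{T}$, every semi-positive $T$ of rank $<n$ (that is, every $T$ on the boundary) has $\det(T)=0$, so the Fourier expansion of $\varTheta(F)$ is supported on \emph{positive definite} $T$ only. A holomorphic modular form with this property lies in the kernel of the Siegel $\Phi$-operator, hence is a cusp form. Concretely, once a modular $G\equiv\varTheta(F)\pmod p$ of weight $k+p+1$ is produced, one has $\Phi(G)\equiv\Phi(\varTheta(F))=0\pmod p$, and this forces $G$ to be congruent mod $p$ to a cusp form; the only subtlety is the $p$-integral compatibility of $\Phi$ with reduction and of the associated Klingen–Eisenstein lift, which I expect to be controlled by the hypothesis $p\ge n+3$. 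Thus the whole problem reduces to the holomorphic modularity of $\varTheta(F)$ modulo $p$.

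For the modularity I would follow the strategy that, for $n=1$, is Serre's treatment of $\theta$. Realize $\varTheta$ as the determinantal differential operator $\det(\partial_Z)$, where $\partial_Z=\big(\tfrac{1+\delta_{ij}}{2}\tfrac{1}{2\pi i}\tfrac{\partial}{\partial z_{ij}}\big)_{i,j}$, so that $\det(\partial_Z)q^{T}=\det(T)q^{T}$. Since under $Z\mapsto(AZ+B)(CZ+D)^{-1}$ the matrix $\partial_Z$ transforms by a congruence with $CZ+D$, the determinant $\det(\partial_Z)$ shifts the scalar weight by $2$ rather than by $2n$ — this is exactly why the target weight is uniform in $n$. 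The operator does not preserve holomorphy, so using Shimura's theory of nearly holomorphic Siegel modular forms I would decompose
\[
\det(\partial_Z)F=\mathbb{D}_k F+\big(\text{correction terms involving }\det(2\pi Y)^{-1}\text{ and a weight-}2\text{ quasimodular Eisenstein object}\big),
\]
where $\mathbb{D}_k F$ is a genuinely modular (Maass–Shimura) form of weight $k+2$ and the correction terms are the degree-$n$ analogue of the $E_2$-term in the elliptic Serre derivative.

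The reduction modulo $p$ then rests on two Eisenstein congruences. First, the Siegel–Eisenstein series $E^{(n)}_{p-1}$ of weight $p-1$ is $p$-integral with $q$-expansion $\equiv 1\pmod p$ — the degree-$n$ Hasse invariant — so multiplying $\mathbb{D}_k F$ by $E^{(n)}_{p-1}$ raises its weight from $k+2$ to $k+p+1$ without changing its $q$-expansion mod $p$. Second, the weight-$2$ quasimodular object in the correction terms should coincide mod $p$ with a genuinely modular Eisenstein series of weight $p+1$ (the analogue of $E_{p+1}\equiv E_2\pmod p$, which for $n=1$ rests on the Kummer congruence $B_{p+1}\equiv\tfrac1{12}\pmod p$); the non-holomorphic factors $\det(2\pi Y)^{-1}$ then disappear mod $p$ and the corrections become holomorphic modular forms of weight $k+p+1$. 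Summing the pieces yields a holomorphic $G\in M_{k+p+1}(\Gamma_n)_{\mathbb{Z}_{(p)}}$ with $\varTheta(F)\equiv G\pmod p$, which by the first paragraph may be taken cuspidal.

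The main obstacle, and the genuinely new point beyond the elliptic case, is the uniform control modulo $p$ of the \emph{whole tower} of non-holomorphic corrections produced by $\det(\partial_Z)$: in degree $n$ there are up to $n$ such terms carrying powers of $\det(2\pi Y)^{-1}$ together with combinatorial and Bernoulli normalizations, and one must show that each reduces to a $p$-integral holomorphic modular form of weight $k+p+1$. This is where $p\ge n+3$ enters: it forces $n<p-2$, so the factorials and binomial factors from the determinant expansion, and the Bernoulli denominators in the relevant Eisenstein series, are $p$-units, while the $\det(Y)$-power denominators are invertible mod $p$; it also secures the $p$-integrality and the $q$-expansion $\equiv1$ of $E^{(n)}_{p-1}$. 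A conceptually cleaner and more robust alternative would be to argue geometrically in the style of Katz: interpret mod-$p$ forms as sections of $(\det\omega)^{\otimes k}$ over the moduli space in characteristic $p$, build $\varTheta$ from the Gauss–Manin connection and the Kodaira–Spencer isomorphism $\Omega^1\cong\mathrm{Sym}^2\omega$, and use the $p$-curvature together with the Hasse invariant to see directly that the output is a holomorphic section of weight $k+p+1$; there the difficulty migrates to the behaviour at the boundary of a toroidal compactification and the $q$-expansion principle, again governed by $p\ge n+3$.
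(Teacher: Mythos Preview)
The present paper does not prove this theorem; it is quoted from B\"ocherer--Nagaoka \cite{B-N} and used as a black box in the proof of Theorem~\ref{mainTH}. So there is no in-paper argument to compare against, and your sketch is really an attempt to reconstruct \cite{B-N}.

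Several of your points are right: cuspidality is free since $\det T=0$ on singular $T$; $\det(\partial_Z)$ does pick up $\det(CZ+D)^{2}$, so the target scalar weight is $k+2$ uniformly in $n$; and you correctly locate the real difficulty in the non-holomorphic tower. But there is a genuine gap in the organization. Your $\mathbb{D}_kF$, being the Maass--Shimura raising of $F$, is \emph{not holomorphic}; it has no $q$-expansion in the ordinary sense, so ``multiplying by $E^{(n)}_{p-1}$ without changing the $q$-expansion mod $p$'' is not meaningful for it, and the decomposition $\varTheta(F)=\mathbb{D}_kF+(\text{corrections})$ cannot be converted term-by-term into holomorphic forms of weight $k+p+1$. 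In degree $1$ this is hidden because one may pass instead through the \emph{holomorphic} Serre derivative $\theta-\tfrac{k}{12}E_2$; for $n>1$ there is no such holomorphic weight-$2$ quasimodular correction, and your line ``the $\det(2\pi Y)^{-1}$ factors disappear mod $p$'' is precisely the assertion that requires proof, not an ingredient you may invoke.

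The argument in \cite{B-N} sidesteps this by building holomorphy in from the start: one takes $G$ to be a Rankin--Cohen--type bilinear differential operator (assembled from $\det(\partial_Z)$) applied to the pair $(F,E^{(n)}_{p-1})$, a holomorphic cusp form of weight $k+(p-1)+2=k+p+1$ by construction, and then checks on Fourier coefficients that the congruence $E^{(n)}_{p-1}\equiv 1\pmod p$ kills every summand in which a derivative lands on $E^{(n)}_{p-1}$, leaving a $p$-unit multiple of $\det(T)\,a(T;F)$. The hypothesis $p\ge n+3$ enters to make $E^{(n)}_{p-1}$ available, $p$-integral and $\equiv 1\pmod p$, and to keep the combinatorial constants in the bracket $p$-units. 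Your Katz-style alternative is also a legitimate route, but there too the content is concentrated at the boundary and in the $p$-curvature identification, which you have not carried out.
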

\noindent
{\it Example.} Under the notation in $\S$ \ref{generators}, we have
\[
\varTheta (X_6) \equiv 4X_{12} \pmod{5}.
\]
\section{Main result}
\label{main}
On the basis of the previous preparation, we can now describe our main result.
\begin{Thm}
\label{mainTH}
Let $a(T;X_{35})$ denote the Fourier coefficient of $X_{35}$. 
If $\text{det}(T) \not\equiv 0 \pmod{23}$,
then
\[
a(T;X_{35}) \equiv 0 \pmod{23},
\]
or equivalently,
\[
\varTheta(X_{35}) \equiv 0 \pmod{23}.
\]
\end{Thm}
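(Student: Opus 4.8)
The plan is to use the equivalent formulation in terms of the theta operator: showing $a(T;X_{35})\equiv 0\pmod{23}$ for all $T$ with $\det(T)\not\equiv 0\pmod{23}$ is exactly the statement that $\varTheta(X_{35})\equiv 0\pmod{23}$, since the $T$-th Fourier coefficient of $\varTheta(X_{35})$ is $\det(T)\cdot a(T;X_{35})$, and for $\det(T)\not\equiv 0$ this vanishes mod $23$ iff $a(T;X_{35})$ does; while for $\det(T)\equiv 0$ the coefficient of $\varTheta(X_{35})$ is automatically $0$ mod $23$. So it suffices to prove $\varTheta(X_{35})\equiv 0\pmod{23}$.

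First I would apply Theorem~\ref{B-N} with $n=2$ and $p=23$ (note $23\ge n+3=5$): there is a cusp form $G\in S_{35+23+1}(\Gamma_2)_{\mathbb{Z}_{(23)}}=S_{59}(\Gamma_2)_{\mathbb{Z}_{(23)}}$ with $\varTheta(X_{35})\equiv G\pmod{23}$. Now $G$ has odd weight $k=59$, so Proposition~\ref{Prop1} applies with $p=23$. By that proposition, to conclude $G\equiv 0\pmod{23}$ it is enough to verify that $m_{23}(G)\succ ([\frac{k-35}{10}]+2,[\frac{k-35}{10}]+3,r_0-1)$ where $r_0$ is the largest integer with $([\frac{k-35}{10}],[\frac{k-35}{10}],r_0)\in L_2$. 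Here $[\frac{59-35}{10}]=[2.4]=2$, so the bound matrix is $(4,5,r_0-1)$ with $r_0$ the largest integer such that $(2,2,r_0)$ is semi-positive definite, i.e. $4\cdot 2\cdot 2-r_0^2\ge 0$, giving $r_0=4$ and bound $(4,5,3)$. Thus I must check that $\det(T)\cdot a(T;X_{35})\equiv 0\pmod{23}$ — equivalently $a(T;X_{35})\equiv 0\pmod{23}$ or $\det(T)\equiv 0\pmod{23}$ — for every $T=(m,n,r)\in L_2$ with $T\preceq (4,5,3)$; there are only finitely many such $T$ (those with small trace), and this is the finite computational core of the argument.

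The main obstacle is precisely this finite check: one needs the Fourier coefficients $a(T;X_{35})$ for all semi-positive definite $T=(m,n,r)$ with $T\preceq (4,5,3)$ in the order $\succ$, i.e. essentially all $T$ with $\mathrm{tr}(T)\le 9$ plus the few with trace $9$ that are $\preceq(4,5,3)$. These coefficients are not small and must be obtained from an explicit expression of $X_{35}=2^2 i\chi_{35}$ in terms of the even generators $\varphi_4,\varphi_6,\chi_{10},\chi_{12}$ (for instance via Igusa's formula for $\chi_{35}^2$ as a polynomial in the even generators, or a known expansion of $\chi_{35}$); one then reduces each relevant coefficient mod $23$ and confirms that whenever $\det(T)=mn-\tfrac{r^2}{4}\not\equiv 0\pmod{23}$ one has $a(T;X_{35})\equiv 0\pmod{23}$. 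Once every such $T$ with $T\preceq(4,5,3)$ is dispatched, Proposition~\ref{Prop1} forces $G\equiv 0\pmod{23}$, hence $\varTheta(X_{35})\equiv 0\pmod{23}$, which is the claimed congruence.
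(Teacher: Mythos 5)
Your proposal follows exactly the route of the paper's proof: reduce to $\varTheta(X_{35})\equiv 0\pmod{23}$, invoke Theorem~\ref{B-N} to replace $\varTheta(X_{35})$ by a cusp form $G\in S_{59}(\Gamma_2)_{\mathbb{Z}_{(23)}}$, apply Proposition~\ref{Prop1} with the correctly computed bound $(4,5,3)$, and thereby reduce everything to a finite check of the coefficients $a(T;X_{35})$ for $T\preceq(4,5,3)$. The only thing you leave undone is the explicit numerical verification of those finitely many coefficients (the paper tabulates them and confirms that each is divisible by $23$ or has $\det(T)=23/4\equiv 0\pmod{23}$); this is the computational heart of the argument, but it requires no further ideas beyond what you describe.
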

\begin{proof}
Our proof mainly depends on Proposition \ref{Prop1} and numerical calculation of the
Fourier coefficients of $X_{35}$. If we use the theta operator, this assertion is equivalent to showing
that
\[
\varTheta (X_{35}) \equiv 0 \pmod{23}.
\]
From Theorem \ref{B-N}, there exists a Siegel cusp form
$G\in S_{59}(\Gamma_2)_{\mathbb{Z}_{(23)}}$ such that
\[
\varTheta (X_{35}) \equiv G \pmod{23}.
\]
Therefore the proof is reduced to showing that
\begin{equation}
\label{last}
G \equiv 0 \pmod{23}.
\end{equation}
We now apply Proposition \ref{Prop1} to the form $G$. It then suffices to show that
\[
a((m,n,r);G) \equiv 0 \pmod{23}\quad \text{for}\;\; T=(m,n,r)\;\;\text{with}\;\;
\text{tr}(T)=m+n\leq 10.
\]
Since $a((m,n,r);G)=-a((n,m,r);G)$ for the odd-weight form $G$,
this statement is equivalent to 
\[
a((m,n,r);\varTheta (X_{35})) \equiv 0 \pmod{23}\quad \text{for} \;\;T=(m,n,r)\;\text{with}\;\; \text{tr}(T)=m+n\leq 9.
\]
We then write down the first part the Fourier expansion of $X_{35}$ following
the order introduced in $\S$ \ref{order}. For this, we set
\[
q_{jk}:=\text{exp}(2\pi i z_{jk})\quad \text{for}\quad
Z=\begin{pmatrix}z_{11}& z_{12}\\z_{12}&z_{22}\end{pmatrix}\in\mathbb{H}_2. 
\]
The terms corresponding to $T=(m,n,r)$ with $\text{tr}(T)=m+n\leq 9$ are as follows:
{\scriptsize
\begin{align*}
X_{35} &= (q_{12}^{-1}-q_{12})q_{11}^2q_{22}^3+(-q_{12}^{-1}+q_{12})q_{11}^3q_{22}^2\\
         &+(-q_{12}^{-3}-69q_{12}^{-1}+69q_{12}+q_{12}^3)q_{11}^2q_{22}^4
            +(q_{12}^{-3}+69q_{12}^{-1}-69q_{12}-q_{12}^3)q_{11}^4q_{22}^2\\
         &+(69q_{12}^{-3}+2277q_{12}^{-1}-2277q_{12}-69q_{12}^3)q_{11}^2q_{22}^5\\
         &+(q_{12}^{-5}-32384q_{12}^{-2}-129421q_{12}^{-1}+129421q_{12}+32384q_{12}^2
            -q_{12}^5)q_{11}^3q_{22}^4\\
         &+(-q_{12}^{-5}+32384q_{12}^{-2}+129421q_{12}^{-1}-129421q_{12}-32384q_{12}^2
            +q_{12}^5)q_{11}^4q_{22}^3\\
         &+(-69q_{12}^{-3}-2277q_{12}^{-1}+2277q_{12}+69q_{12}^3)q_{11}^5q_{22}^2\\
         &+(q_{12}^{-5}-2277q_{12}^{-3}-47702q_{12}^{-1}+47702q_{12}+2277q_{12}^3
            -q_{12}^5)q_{11}^2q_{22}^6\\
         &+(32384q_{12}^{-4}-2184448q_{12}^{-2}-3203072q_{12}^{-1}+3203072q_{12}
            +2184448q_{12}^2-32384q_{12}^4)q_{11}^3q_{22}^5\\
         &+(-32384q_{12}^{-4}+2184448q_{12}^{-2}+3203072q_{12}^{-1}-3203072q_{12}
            -2184448q_{12}^2+32384q_{12}^4)q_{11}^5q_{22}^3\\
         &+(-q_{12}^{-5}+2277q_{12}^{-3}+47702q_{12}^{-1}-47702q_{12}-2277q_{12}^3
            +q_{12}^5)q_{11}^6q_{22}^2\\
         &+(-69q_{12}^{-5}+47702q_{12}^{-3}+709665q_{12}^{-1}-709665q_{12}-47702q_{12}^3
            +69q_{12}^5)q_{11}^2q_{22}^7\\
         &+(-q_{12}^{-7}+129421q_{12}^{-5}+2184448q_{12}^{-4}+41321984q_{12}^{-2}
            +105235626q_{12}^{-1}\\
         &\quad -105235626q_{12}-41321984q_{12}^2-2184448q_{12}^4-129421q_{12}^5
          +q_{12}^7)q_{11}^3q_{22}^6\\                                                         
         &+(-69q_{12}^{-7}-32384q_{12}^{-6}+107121810q_{12}^{-3}-31380096q_{12}^{-2}
            +759797709q_{12}^{-1}\\
         &\quad -759797709q_{12}+31380096q_{12}^2-107121810q_{12}^3+32384q_{12}^6
            +69q_{12}^7)q_{11}^4q_{22}^5\\
         &+(69q_{12}^{-7}+32384q_{12}^{-6}-107121810q_{12}^{-3}+31380096q_{12}^{-2}
            -759797709q_{12}^{-1}\\
         &\quad +759797709q_{12}-31380096q_{12}^2+107121810q_{12}^3-32384q_{12}^6
            -69q_{12}^7)q_{11}^5q_{22}^4\\
         &+(q_{12}^{-7}-129421q_{12}^{-5}-2184448q_{12}^{-4}-41321984q_{12}^{-2}
            -105235626q_{12}^{-1}\\
         &\quad +105235626q_{12}+41321984q_{12}^2+2184448q_{12}^4+129421q_{12}^5
           -q_{12}^7)q_{11}^6q_{22}^3\\ 
         &+(69q_{12}^{-5}-47702q_{12}^{-3}-709665q_{12}^{-1}+709665q_{12}+47702q_{12}^3
            -69q_{12}^5)q_{11}^7q_{22}^2+\cdots\\         
\end{align*}
}
The Fourier coefficients different from $\pm 1$ are as follows:\\
{\scriptsize
$a((4,1,2);X_{35})=-69=-3\cdot\underline{23}$,\quad 
$a((5,1,2);X_{35})=2277=3^2\cdot 11\cdot\underline{23}$,\\
$a((4,1,3);X_{35})=-1294121=-17\cdot\underline{23}\cdot 331$,\quad 
$a((4,2,3);X_{35})=-32384=-2^7\cdot 11\cdot\underline{23}$,\\
$a((6,1,2);X_{35})=-47702=-2\cdot 17\cdot\underline{23}\cdot 61$,\quad 
$a((5,1,3);X_{35})=-3203072=-2^{13}\cdot 17\cdot\underline{23}$,\\
$a((5,2,3);X_{35})=-2184448=-2^8\cdot 7\cdot\underline{23}\cdot 53$,\quad 
$a((7,1,2);X_{35})=709665=3\cdot 5\cdot 11^2\cdot 17\cdot\underline{23}$,\\
$a((6,1,3);X_{35})=105235626=2\cdot 3\cdot\underline{23}\cdot 762577$,\quad 
$a((6,2,3);X_{35})=41321984=2^9\cdot 11^2\cdot\underline{23}\cdot 29$,\\
$a((5,1,4);X_{35})=759797709=3\cdot 11\cdot\underline{23}\cdot 29\cdot 34519$,\quad 
$a((5,2,4);X_{35})=-31380096=-2^7\cdot 3\cdot 11\cdot 17\cdot 19\cdot\underline{23}$,\\
$a((5,3,4);X_{35})=107121810=2\cdot 3\cdot 5\cdot 19\cdot\underline{23}\cdot 8171$.
}
\vspace{3mm}
\\
\quad All of these Fourier coefficients are divisible by 23. On the other hand,
if $a(T;X_{35})=\pm 1$ for $T$ in this range, then $\text{det}(T)=23/4 \equiv 0 \pmod{23}$.
This fact implies that
\[
a((m,n,r);\varTheta (X_{35})) \equiv 0 \pmod{23}
\]
for $T=(m,n,r)$ with $\text{tr}(T)=m+n\leq 9$. Therefore, we obtain
\[
a((m,n,r);G) \equiv 0 \pmod{23}
\]
for $T=(m,n,r)$ with $\text{tr}(T)=m+n\leq 9$. Consequently we have (\ref{last}).
This completes the proof of our theorem.
\end{proof}
\begin{Rem} (1)\;
The converse statement of the theorem is not true in general. In fact
\[
a((1,6,1);X_{35})=0\quad\text{and}\quad \text{det}((1,6,1))=23/4 \equiv 0 \pmod{23}.
\]
Numerical examples of $a((m,n,r);X_{35})$ are found in \cite{A-I}, page 277.\\
(2)\; There are other ``modulo 23'' congruences for the Siegel modular forms
in \cite{B}, Satz 5,(a).
In that case, the congruence is concerned with the
Eisenstein lifting of the Ramanujan delta function.
\end{Rem} 


Toshiyuki Kikuta\\ 
Department of Mathematics\\
Osaka Institute of Technology\\
5-16-1 Omiya, Asahi-ku, Osaka 535-8585, Japan\\
Email: kikuta84@gmail.com\\
\\
Hirotaka Kodama\\
Department of Mathematics\\
Kinki University\\
Higashi-Osaka, Osaka 577-8502, Japan\\
Email: kodama@math.kindai.ac.jp\\
\\
Shoyu Nagaoka\\
Department of Mathematics\\
Kinki University\\
Higashi-Osaka, Osaka 577-8502, Japan\\
Email: nagaoka@math.kindai.ac.jp

\end{document}